\documentclass[12pt]{amsart}
\usepackage{amsmath}
\usepackage{amsthm}
\usepackage{latexsym}
\usepackage{amssymb}
\usepackage{mathrsfs}
\DeclareMathOperator{\Char}{Char}

\DeclareMathOperator{\re}{Re}
\DeclareMathOperator{\im}{Im}

\DeclareMathOperator{\id}{Id}

\DeclareMathOperator{\dist}{dist}

\DeclareMathOperator{\ord}{ord}

\vbadness=10000
\newtheorem{theorem}{Theorem}
\newtheorem{proposition}{Proposition}

\newtheorem{corollary}{Corollary}
\newtheorem{definition}{Definition}
\newtheorem{example}{Example}
\newtheorem{remark}{Remark}

%

\def\jpopn#1#2{%
  \mathopen{%
    \setbox0=\hbox{$#1\langle$}%
    \setbox2=\hbox{%
            {\hbox{$#1\langle$}}%
            \kern -.6\wd0\box0%
    }%
            \box2%
  }%
}

\def\jpcls#1#2{%
  \mathclose{%
    \setbox0=\hbox{$#1\rangle$}%
    \setbox2=\hbox{%
            {\hbox{$#1\rangle$}}%
            \kern -.6\wd0\box0%
    }%
            \box2%
  }%
}

\def\tp{\ {}^{t}\kern -3pt}


\renewcommand{\thetheorem}{\thesection.\arabic{theorem}}
\renewcommand{\theproposition}{\thesection.\arabic{proposition}}
\renewcommand{\thelemma}{\thesection.\arabic{lemma}}
\renewcommand{\thedefinition}{\thesection.\arabic{definition}}
\renewcommand{\thecorollary}{\thesection.\arabic{corollary}}
\renewcommand{\theequation}{\thesection.\arabic{equation}}
\renewcommand{\theremark}{\thesection.\arabic{remark}}
\renewcommand{\theconjecture}{\thesection.\arabic{conjecture}}
\begin{document}
%
\def\A {{\mathcal{A}}}
\def\D {{\mathcal{D}}}
\def\R {{\mathbb{R}}}
\def\N {{\mathbb{N}}}
\def\C {{\mathbb{C}}}
\def\Z {{\mathbb{Z}}}
\def\Q {{\mathbb{Q}}}
\def\phi{\varphi}
\def\epsilon{\varepsilon}
\def\kappa{\varkappa}
%
%
%
%

\title[Perturbed Sums of Squares]{Analytic and Gevrey Hypoellipticity for Perturbed
  Sums of Squares Operators} 
\author{Antonio Bove}
\address{Dipartimento di Matematica, Universit\`a
di Bologna, Piazza di Porta San Donato 5, Bologna
Italy}
\email{bove@bo.infn.it}
\author{Gregorio Chinni}
\address{Dipartimento di Matematica, Universit\`a
di Bologna, Piazza di Porta San Donato 5, Bologna
Italy}
\email{gregorio.chinni@gmail.com}
\date{\today}

\begin{abstract}
We prove a couple of results concerning pseudodifferential
perturbations of differential operators being sums of squares of
vector fields and satisfying H\"ormander's condition. The first
is on the minimal Gevrey regularity: if a sum of squares with analytic
coefficients is perturbed with a pseudodifferential operator of order
strictly less than its subelliptic index it still has the Gevrey
minimal regularity. We also prove a statement concerning real analytic
hypoellipticity for the same type of pseudodifferential perturbations,
provided the operator satisfies to some extra conditions (see Theorem
\ref{th:2} below) that ensure the analytic hypoellipticity. 
\end{abstract}
\subjclass[2010]{35H10, 35H20 (primary), 35B65, 35A20, 35A27 (secondary).}
\keywords{Sums of squares of vector fields; Analytic hypoellipticity;
  Gevrey hypoellipticity}
\maketitle
%

\section{Introduction and Statement of the Result}
\setcounter{equation}{0}
\setcounter{theorem}{0}
\setcounter{proposition}{0}
\setcounter{lemma}{0}
\setcounter{corollary}{0}
\setcounter{definition}{0}
\setcounter{remark}{0}

Let $ X_{j}(x, D) $, $ j = 1, \ldots, N $, $ N \in \N $, be vector fields
defined in an open subset of $ U \subset \R^{n} $. We may suppose that
the origin belongs to $ U $ and that the vector fields have analytic
coefficients defined in $ U $. Let
\begin{equation}
\label{eq:P}
P (x, D) = \sum_{j=1}^{N} X_{j}(x, D)^{2},
\end{equation}
and assume that the vector fields satisfy the H\"ormander's condition:
\begin{itemize}
\item[(H) ]{}
The Lie algebra generated by the vector fields and their commutators
has dimension $ n $, equal to the dimension of the ambient space.
\end{itemize}
H\"ormander proved in \cite{hormander67} that (H) is sufficient for $
C^{\infty} $ hypoellipticity.

The operator $ P $ satisfies the \textit{a priori} estimate
\begin{equation}
\label{eq:Cinftysubell}
\| u \|_{1/r}^{2} + \sum_{j=1}^{N} \|X_{j} u \|_{0}^{2} \leq C \left(
  |\langle P u , u \rangle | + \| u\|^{2}_{0} \right),
\end{equation}
which we call, for the sake of brevity, the ``subelliptic estimate.''
Here $ u \in C_{0}^{\infty}(U) $, $ \| \cdot \|_{0}  $ denotes the
norm in $ L^{2}(U) $ and $ \| \cdot 
\|_{s} $ the Sobolev norm of order $ s $ in $ U $. 
Since the vector fields satisfy condition (H), we denoted by $ r $
the length of the iterated commutator such that the vector fields,
their commutators, their triple commutators etcetera up to the
commutators of length $ r $ generate a Lie algebra of dimension equal
to that of the ambient space. 

The above estimate was proved first by H\"ormander in
\cite{hormander67} for a Sobolev norm of order $ r^{-1} + \epsilon $
and up to order $ r^{-1} $ subsequently by Rothschild and Stein
(\cite{rs}) as well as in a pseudodifferential context by Bolley,
Camus and Nourrigat in \cite{bcn}.

Basically using \eqref{eq:Cinftysubell} Derridj and Zuily proved in
\cite{dz} that any operator of the form \eqref{eq:P} is Gevrey
hypoelliptic of order $ r $, i.e. that if $ u $ is a distribution on
an open set $ U $ such that $ P u \in G^{r}(U) $ then $ u \in G^{r}(U)
$. In \cite{abc} a microlocal version of this has been proved and we
refer to subsection 2.4 for more details.

The purpose of this note is to study the following problem: when the
hypoellipticity properties of the operator $ P $ are preserved if we
are willing to perturb it with an analytic pseudifferential operator? 

It is known (see \cite{horm3}, Theorems 22.4.14 as well as 22.4.15)
that if we perturb a sum of squares with an arbitrary first order
operator we may obtain a non hypoelliptic operator. For instance if we
consider $ P (x, D) = D_{1}^{2} + x_{1}^{2} D_{2}^{2} $ in two
variables and perturb it with a first order operator: $ \tilde{P} (x,
D) = D_{1}^{2} + x_{1}^{2} D_{2}^{2} + \alpha D_{2} $, we have a non
hypoelliptic operator if $ \alpha = \pm 1 $ or if $ \alpha $ is a
function assuming those values at the point of interest.

In a sort of converse direction Stein, in \cite{stein}, proved that if
we consider Kohn's Laplacian, $ \Box_{b} $, which is neither
hypoelliptic nor analytic hypoelliptic, and perturb it with a non zero
complex number, $ \Box_{b} + \alpha $, $ \alpha \in \C\setminus\{0\}
$, we obtain an operator being both hypoelliptic and analytic
hypoelliptic. 

For further details on (first order) differential perturbations we
refer to the papers \cite{gt} and \cite{mm}. For a pseudodifferential
perturbation we give, in the Appendix, a very brief account showing
that the order of the perturbation does matter lest we have to impose
extra conditions on the perturbing symbol. 

These facts suggest that,
if no other conditions are to be imposed on the perturbing operator,
its order has to be strictly less than the subelliptic index of the
sum of squares.

\bigskip

Before stating our result we need some notation.

Write $ \{X_{i} , X_{j}\} $ for the Poisson bracket of the symbols
of the vector fields $ X_{i} $, $ X_{j} $:
$$ 
\{X_{i} , X_{j}\} (x, \xi) = \sum_{\ell=1}^{n} \left( \frac{\partial
    X_{i}}{\partial \xi_{\ell}} \frac{\partial X_{j}
  }{\partial x_{\ell}} - 
  \frac{\partial X_{j}}{\partial \xi_{\ell}} \frac{\partial X_{i}
  }{\partial x_{\ell}} \right) (x, \xi) .
$$
\begin{definition}
\label{def:nu}
Fix a point $ (x_{0}, \xi_{0}) \in \Char(P) $. Consider all the
iterated Poisson brackets $ \{X_{i}, X_{j}\} $, $ \{ \{X_{i}, X_{j}\},
X_{k} \}$ etcetera.

We define $ \nu (x_{0}, \xi_{0})  $ as the length
of the shortest iterated Poisson bracket of the symbols of the vector
fields which is non zero at $ (x_{0}, \xi_{0}) $. 
\end{definition}
Now we have
\begin{theorem}
\label{th:1}
Let $ P $ be as in \eqref{eq:P} and denote by $  Q(x, D) $ an
analytic pseudodifferential operator defined in a conical neighborhood
of the point $ (x_{0}, \xi_{0}) \in \Char(P) $---the characteristic
variety of $ P $. If 
$$ 
\ord(Q) < 2/\nu(x_{0}, \xi_{0})  
$$ 
then $ P + Q $ is $ G^{\nu(x_{0}, \xi_{0})} $ hypoelliptic at $ (x_{0}, \xi_{0})  $.
\end{theorem}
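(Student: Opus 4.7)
The plan is to reduce the perturbed statement to the unperturbed microlocal Gevrey result of \cite{abc}, which asserts that $P$ itself is $G^{\nu(x_0,\xi_0)}$-hypoelliptic at $(x_0,\xi_0)$ by means of a microlocal version of the subelliptic estimate \eqref{eq:Cinftysubell} with $1/r$ replaced by $1/\nu$. Concretely, one has cut-off operators $A \in \OPS^0$ properly supported in a small conic neighborhood of $(x_0,\xi_0)$ for which an estimate of the form
$$
\|A u\|_{1/\nu}^{2} + \sum_{j} \|X_{j} A u\|_{0}^{2} \;\leq\; C\bigl(|\langle P u , A^{*}A u\rangle| + \|B u\|_{0}^{2}\bigr)
$$
holds, where $B \in \OPS^{0}$ has slightly wider microlocal support than $A$. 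This is the starting brick.

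The next step is to swap $P$ for $P+Q$. Writing $\langle P u, A^{*}Au\rangle = \langle (P+Q)u, A^{*}Au\rangle - \langle Qu, A^{*}Au\rangle$, I would estimate the last term by
$$
|\langle Qu, A^{*}A u\rangle| \;\leq\; \|A u\|_{1/\nu}\, \|A Q u\|_{-1/\nu} \;\leq\; \tfrac{1}{2}\|Au\|_{1/\nu}^{2} + C\,\|B u\|_{\ord(Q)-1/\nu}^{2},
$$
using the calculus to move $A$ across $Q$ modulo lower-order terms. The crucial point is that the hypothesis $\ord(Q) < 2/\nu$ says precisely $\ord(Q) - 1/\nu < 1/\nu$, so the error is strictly of lower Sobolev order than the gain provided by the left-hand side; after absorbing the first piece, this yields a microlocal subelliptic estimate for $P+Q$ with the same $1/\nu$ gain, up to an error controlled in an a priori \emph{weaker} norm than $\|Au\|_{1/\nu}$.

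With such an estimate in hand, $C^{\infty}$ hypoellipticity is immediate by the usual bootstrap. For the Gevrey statement I would follow the scheme of Derridj--Zuily \cite{dz} in its microlocal form from \cite{abc}: introduce a family of analytic-type cut-offs $\phi_{k}$ supported in shrinking conic neighborhoods of $(x_0,\xi_0)$ satisfying the Gevrey-$\nu$ bounds $|\partial^{\alpha}\phi_{k}| \leq C^{|\alpha|+1}k^{(\nu-1)|\alpha|}$, apply the perturbed estimate with $A = \op(\phi_{k})\Lambda^{N}$ for $N$ growing with $k$, and iterate. At each step, the commutators $[P+Q,\op(\phi_{k})]$ split as $[P,\op(\phi_{k})]+[Q,\op(\phi_{k})]$; the first one is handled exactly as in \cite{abc,dz}, while the second is new and must be fed back into the scheme. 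Analyticity of the symbol of $Q$ is used here through Boutet de Monvel--Kr\'ee-type estimates on its iterated commutators, ensuring that each commutator with $\phi_{k}$ gains a derivative in the $\phi_{k}$-bound while costing only a bounded factor coming from the analytic symbol of $Q$.

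The main obstacle I foresee is the bookkeeping in this iteration: one must show that the factors contributed by $Q$ at each step, combined with the loss of $\ord(Q)$ derivatives (less than $2/\nu$) versus the gain of $2/\nu$ in the estimate, still produce constants of Gevrey-$\nu$ type $C^{k+1}(k!)^{\nu}$. The slack $2/\nu - \ord(Q) > 0$ provided by the strict inequality is precisely what is needed to absorb the $Q$-contributions into the inductive hypothesis without degrading the Gevrey exponent; verifying this quantitatively, and in particular handling the nonlocal nature of $Q$ against the localized cut-offs, is the delicate part of the argument.
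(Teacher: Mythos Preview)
Your outline follows a genuinely different route from the paper. The paper does \emph{not} iterate a real-side subelliptic estimate with Gevrey cut-offs \`a la Derridj--Zuily; instead it works entirely on the FBI side, using the Grigis--Sj\"ostrand deformation technique. One solves a Hamilton--Jacobi problem with the weight
\[
h(x,\xi,\lambda)=\lambda^{-(r-1)/r}\,|x-x_0|^2 \quad\text{on }\Lambda_{\Phi_0},
\]
obtaining a deformed weight $\Phi_t$, and then applies the FBI \textit{a priori} estimate of \cite{abc} (Theorem~\ref{apriorifriendly} here) for the $\Omega$-realization of $P$ with respect to $\Phi_t$. The perturbation $Q$ then enters only through the trivial bound $\|Q^{t,\Omega}u\|_{\Phi_t}\le C\lambda^{\theta}\|u\|_{\Phi_t}$ with $\theta=\ord(Q)<2/r$, which is absorbed by the $\lambda^{2/r}$ gain in a single step; no commutator bookkeeping with $Q$ is ever needed. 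The Gevrey conclusion drops out from $\Phi_t-\Phi_0\le C^{-1}\lambda^{-1+1/r}$ near the base point.

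Two comments on your proposal itself. First, the starting estimate you attribute to \cite{abc}, in the form $\|Au\|_{1/\nu}^2+\sum_j\|X_jAu\|_0^2\le C(|\langle Pu,A^*Au\rangle|+\|Bu\|_0^2)$ with $A\in\OPS^0$ on the real side, is not what \cite{abc} proves: that paper already works on the FBI side and gives the estimate \eqref{apriorifin} in $H_{\Phi}$-norms. You would have to either derive your real-side microlocal estimate separately or translate your scheme to the FBI setting. Second, the part you flag as ``the delicate part'' --- controlling the iterated commutators of an \emph{analytic pseudodifferential} $Q$ against a nested family of Gevrey cut-offs so as to preserve the $G^{\nu}$ growth --- is precisely the step that the paper's approach is designed to avoid, and your sketch does not indicate how to close it. The strict inequality $\ord(Q)<2/\nu$ does give slack at each step, but turning this into a convergent $G^{\nu}$ scheme for a nonlocal $Q$ requires a full Boutet de Monvel--Kr\'ee type induction that you have not set up; as written, this remains a genuine gap rather than a routine verification.
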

A few remarks are in order.

\begin{itemize}
\item[(a) ]{}
Definition \ref{def:nu} as well as the regularity
obtained in Theorem \ref{th:1} microlocal. We say that an operator $ Q
$ is $ G^{s} $ hypoelliptic at $ (x_{0}, \xi_{0})  $ if $ (x_{0},
\xi_{0}) \not\in WF_{s}(u)  $ provided $ (x_{0}, \xi_{0}) \not \in
WF_{s}(Qu) $.
\item[(b) ]{}
We stated Theorem \ref{th:1} in the case of analytic coefficients, for
the sake of simplicity. Actually one might assume some Gevrey
regularity like we do in the following corollary. 
\end{itemize}
\begin{corollary}
\label{cor:1}
Let $ V $ denote a neighborhood of the point $ x_{0} $ and 
$$ 
r = \sup_{x \in V, |\xi| = 1} \nu (x, \xi).
$$
Let moreover $ P $ be as
above with $ G^{r} $ coefficients defined in $ V $ and $ Q \in
OPS^{m}_{r}(V) $ be a $ G^{r} $ pseudodifferential operator of order $
m < 2/r $. Then $ P + Q $ is $ G^{r} $ hypoelliptic at $ x_{0} $.
\end{corollary}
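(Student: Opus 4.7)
The plan is to reduce the corollary to the microlocal statement of Theorem \ref{th:1} (in a version with $G^{r}$ coefficients rather than analytic ones) via a covering argument over the cosphere fiber above $x_{0}$.

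First, I would observe that for every $(x_{0}, \xi)$ with $|\xi| = 1$, the definition of $r$ forces $\nu(x_{0}, \xi) \leq r$, and hence the assumption $m < 2/r$ yields $\ord(Q) < 2/\nu(x_{0}, \xi)$. Thus, at every characteristic codirection through $x_{0}$, the microlocal hypothesis of Theorem \ref{th:1} is verified. Off $\Char(P)$ there is nothing to prove: $P$ is elliptic of order $2$, $Q$ has order $m < 2/r \leq 2$, so $P + Q$ is classically elliptic there and the $G^{r}$-regularity of $u$ at such points follows from the standard elliptic Gevrey regularity for $G^{r}$ pseudodifferential operators (which applies since both $P$ and $Q$ have $G^{r}$ symbols).

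Second, I would argue that the proof of Theorem \ref{th:1} adapts to $G^{r}$ coefficients. The analytic version proceeds by combining the subelliptic estimate \eqref{eq:Cinftysubell}, iterated on cut-off functions shrinking in a controlled sequence of nested neighborhoods, with analytic pseudodifferential calculus applied to $Q$ to absorb it by the subelliptic term (which is possible precisely because $\ord(Q) < 2/\nu$). In the $G^{r}$-coefficient setting one instead uses $G^{r}$ cut-offs adapted to the Ehrenpreis--Andersson scheme: each commutator $[X_{j}, \chi_{k}]$ and each symbolic expansion remainder for $Q$ produces factorial growth of type $(k!)^{r}$, which is exactly the growth permitted by Gevrey-$r$ regularity. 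Since $r$ majorizes the microlocal indices $\nu(x_{0}, \xi)$, the same iteration produces $G^{r}$ microlocal hypoellipticity at each $(x_{0}, \xi) \in \Char(P)$ with $|\xi| = 1$.

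Third, I would conclude by assembling the microlocal pieces: for any $u$ with $(P+Q)u \in G^{r}$ near $x_{0}$, the preceding two steps give $(x_{0}, \xi) \notin WF_{r}(u)$ for every $\xi \neq 0$, which by the standard equivalence between the triviality of the Gevrey wave front set on the fiber over $x_{0}$ and local $G^{r}$ membership implies $u \in G^{r}$ in a neighborhood of $x_{0}$. The main obstacle is the second step: verifying that the analytic pseudodifferential calculus used in the proof of Theorem \ref{th:1} transfers faithfully to the $G^{r}$ category with $G^{r}$ coefficients, i.e.\ that the symbolic composition, transposition, and remainder estimates all yield constants compatible with Gevrey-$r$ growth. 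This is a routine but non-trivial bookkeeping of the factorial constants in the $G^{r}$ calculus of $\OPS^{m}_{r}$ operators, which is where the strict inequality $m < 2/r$ is crucial to absorb the perturbation into the subelliptic gain.
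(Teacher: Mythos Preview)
The paper does not give a separate proof of the corollary; it is meant to follow from remark~(b) together with the proof of Theorem~\ref{th:1}, i.e.\ the same FBI argument with the weight deformation \eqref{eq:hj}--\eqref{eq:Phit} and the estimate \eqref{apriorifin}, run with $G^{r}$ rather than analytic symbols. Your overall reduction---microlocalize over the cosphere fiber at $x_{0}$, use $\nu(x_{0},\xi)\leq r$ so that $m<2/r\leq 2/\nu(x_{0},\xi)$, apply the microlocal statement at each direction, and reassemble via $WF_{r}$---matches this implicit logic and is correct.

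Where you diverge from the paper is your second step. You describe the proof of Theorem~\ref{th:1} as a Derridj--Zuily style iteration of the real subelliptic estimate \eqref{eq:Cinftysubell} with nested cut-offs in an Ehrenpreis--Andersson scheme. That is \emph{not} what the paper does: Section~3 works entirely on the FBI side, deforming $\Lambda_{\Phi_{0}}$ by the Hamilton flow of the function $h$ in \eqref{eq:h} and using the single \textit{a priori} inequality \eqref{apriorifin} in $H_{\Phi_{t}}$ to absorb $Q$ (this is where $\ord(Q)<2/r$ enters, via $\|Q^{t\,\Omega}u\|_{\Phi_{t}}\leq C\lambda^{\theta}\|u\|_{\Phi_{t}}$). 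No cut-off iteration appears. The Gevrey version needed for the corollary is then simply the observation that the FBI calculus of Section~2 and the estimate of Theorem~\ref{apriorifriendly} go through for $G^{r}$ symbols, yielding $e^{-\lambda^{1/r}/C}$ decay instead of $e^{-\lambda/C}$.

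Your alternative route is not wrong in principle, but it is genuinely different and carries an extra burden you only gesture at: in a cut-off iteration scheme the commutators $[Q,\chi_{k}]$ are non-local pseudodifferential operators, and controlling their contribution with Gevrey-$r$ growth in $k$ requires a careful $G^{r}$ symbolic calculus for $\OPS^{m}_{r}$, not just the elementary estimate $[X_{j},\chi_{k}]=$ (zeroth order). The FBI approach avoids this entirely because the perturbation is absorbed in one step by the subelliptic gain $\lambda^{2/r}$ on the transform side. If you want your write-up to align with the paper, replace your step~2 by the FBI deformation argument of Section~3 with $G^{r}$ symbols; if you prefer to keep the cut-off approach, be explicit that it is an alternative to the paper's method and supply the $\OPS^{m}_{r}$ commutator bounds.
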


\bigskip

A perturbation result for the analytic case can also be proved using
the same ideas as for Theorem \ref{th:1}.

We make the following assumptions on the operator $ P $ in
\eqref{eq:P}:
\begin{itemize}
\item[(1) ]{}
Let $ U \times \Gamma $ be a conic neighborhood of $ (x_{0}, \xi_{0})
$. There exists a real analytic function, $ h (x, \xi) $, $ h \colon U
\times \Gamma \rightarrow [0, +\infty [ $ such that $ h (x_{0},
\xi_{0}) = 0 $ and $ h (x, \xi) > 0 $ in $ U \times \Gamma \setminus
\{ (x_{0}, \xi_{0}) \} $. 
\item[(2) ]{}
There exist real analytic functions $ \alpha_{jk}(x, \xi) $ defined in
$ U \times \Gamma $, such that
\begin{equation}
\label{eq:riprod}
\{ h (x, \xi) , X_{j}(x, \xi) \} = \sum_{\ell=1}^{N} \alpha_{j\ell}(x,
\xi) X_{\ell}(x, \xi) , 
\end{equation}
for $ j = 1, \ldots, N $.
\end{itemize}
In \cite{AB} it was proved that if $ P $, defined as in \eqref{eq:P},
satisfies (1), (2) then $ P $ is analytic hypoelliptic at $ (x_{0},
\xi_{0}) $. 
\begin{theorem}
\label{th:2}
Let $ P $ be as in \eqref{eq:P} and assume that (1) and (2) above are
satisfied. Let $ Q $ be a real analytic pseudodifferential operator
of order strictly less than $ 2/\nu(x_{0}, \xi_{0}) $, then $ P + Q
$ is analytic hypoelliptic at $ (x_{0}, \xi_{0}) $.
\end{theorem}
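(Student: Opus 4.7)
The strategy is to merge the two ingredients already at hand: the proof scheme of Theorem~\ref{th:1}, which shows how a perturbation $Q$ of order strictly less than $2/\nu(x_0,\xi_0)$ can be absorbed into the microlocal subelliptic estimate, and the argument of \cite{AB}, which upgrades that estimate to an analytic-scale iterated estimate using assumptions (1) and (2). The plan is to rerun the \cite{AB} argument with $P$ replaced by $P+Q$ and to check that the additional commutator terms generated by $Q$ remain subcritical at every step of the iteration.

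First I would verify the microlocal subelliptic estimate for $P+Q$. Localizing to a conic neighborhood of $(x_0,\xi_0)$ with a pseudodifferential cutoff $\psi$ of order zero, the subelliptic gain associated to $P$ is $1/\nu(x_0,\xi_0)$, so
\[
\|\psi u\|_{1/\nu}^{2} + \sum_{j=1}^{N}\|X_{j}\psi u\|_{0}^{2} \leq C\bigl(|\langle P \psi u,\psi u\rangle| + \|\psi u\|_{0}^{2}\bigr).
\]
Because $\ord(Q) < 2/\nu(x_0,\xi_0)$, the cross term $|\langle Q\psi u,\psi u\rangle|$ is controlled by $\|\psi u\|_{\ord(Q)/2}^{2}$, which is absorbable on the left with a fixed positive margin. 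This yields the analogous microlocal subelliptic estimate for $P+Q$.

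Next I would implement the iterated multiplier scheme of \cite{AB}. There, the real analytic function $h$ of condition (1), together with the reproducing identity~\eqref{eq:riprod}, is used to construct a family of multipliers whose iterated brackets with $P$ and with the $X_{j}$ generate only lower order terms of the same structural type, producing bounds on derivatives with factorial---rather than super-factorial---growth. Adding $Q$ contributes new terms of the form $[h^{k},Q]$ and iterated brackets $\ad(\psi)^{k}Q$; since $\ord(Q)$ is strictly below $2/\nu(x_0,\xi_0)$, each such term is of lower order in the subelliptic scale, and the analyticity of the symbol of $Q$ bounds its iterated Poisson brackets by $C^{k}k!$, matching the analytic-scale bookkeeping of \cite{AB}.

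The main obstacle will be precisely this bookkeeping in the iteration: one has to show that inserting the $Q$-terms into the \cite{AB} scheme does not upgrade the factorial growth of the constants at any level and, in particular, that the absorption of the $\langle Q\,\cdot\,,\,\cdot\,\rangle$-contribution remains uniform in the iteration index $k$. The strict inequality $\ord(Q) < 2/\nu(x_0,\xi_0)$ furnishes a fixed positive margin at each absorption step, which is exactly what is needed to close the estimate and conclude analytic hypoellipticity of $P+Q$ at $(x_0,\xi_0)$.
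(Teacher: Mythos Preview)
Your proposal takes a genuinely different route from the paper, and as written it also mischaracterizes the mechanism that assumptions (1) and (2) supply.

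The paper's proof does \emph{not} iterate. It works entirely on the FBI side and performs a \emph{single} Lagrangian deformation, exactly as in the proof of Theorem~\ref{th:1}, but with the crucial change that the generator of the deformation is now the order-zero function $h$ supplied by assumption~(1) itself (no $\lambda^{-(r-1)/r}$ scaling). Assumption~(2) is used once, not iteratively: writing $Y^{t}=X\circ\exp(itH_{h})$ for the restriction of the vector fields to $\Lambda_{\Phi_{t}}$, the identity $\{h,X\}=\alpha X$ yields the linear ODE $\partial_{t}Y^{t}=i(\alpha\circ\exp(itH_{h}))\,Y^{t}$, hence $Y^{t}=b_{t}X$ with $b_{t}$ invertible for small $t$. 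Thus $P$ retains, on $\Lambda_{\Phi_{t}}$, the structure of a sum of squares of linear combinations of the original $X_{j}$, and the a~priori estimate of Theorem~\ref{apriorifriendly} holds with the deformed weight $\Phi_{t}$. The perturbation $Q$ is then absorbed in a single step, exactly because $\ord(Q)<2/r$. Finally, assumption~(1) gives $h_{|\Lambda_{\Phi_{0}}}\geq 0$ everywhere and $h\geq a>0$ on $\Omega\setminus\Omega_{1}$, so $\Phi_{t}\geq\Phi_{0}$ on $\Omega$ and $\Phi_{t}\geq\Phi_{0}+c't$ on the annulus; the boundary term therefore decays like $e^{-c\lambda t}$ rather than $e^{-c\lambda^{1/r}}$, which is precisely what upgrades Gevrey to analytic regularity.

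By contrast, you envision a real-side iterated multiplier scheme with factorial bookkeeping of commutators $[h^{k},Q]$ and $\ad(\psi)^{k}Q$. That is not the argument of the paper (nor, in this context, of \cite{AB}), and your outline does not capture the actual role of assumption~(2): the point is not that iterated brackets with $h$ remain ``of the same structural type'', but that the \emph{Hamilton flow} of $h$ preserves the span of the symbols $X_{j}$. Without this, you have no explanation of how an order-zero weight deformation can be used, and hence no mechanism for passing from the $1/r$ subelliptic gain to analytic-scale decay; your sketch leaves exactly that step unspecified.
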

We point out that the \textit{ideal} statement of the above theorem
would be one deducing analytic hypoellipticity of the perturbation
from the analytic hypoellipticity of the operator, without any
assumption but the order of the perturbation. Unfortunately this seems
a much more difficult result to prove and it has been proved in the
global case, for some classes of operators, by Chinni and Cordaro,
\cite{cc}, and by Braun Rodrigues, Chinni, Cordaro and Jahnke,
\cite{bccj}. 

Finally we say a few words about the method of proof. It consists in
using the FBI transform and the subelliptic inequality on the FBI
side obtained in \cite{abc}. To do that we use a deformation technique
of the Lagrangean associated to the FBI proposed by Grigis and
Sj\"ostrand in \cite{gs}.

\section{Background on FBI and Sums of Squares}
\setcounter{equation}{0}
\setcounter{theorem}{0}
\setcounter{proposition}{0}
\setcounter{lemma}{0}
\setcounter{corollary}{0}
\setcounter{definition}{0}
\setcounter{remark}{0}

We are going to use a pseudodifferential and FIO (Fourier Integral
Operators) calculus introduced by Grigis and Sj\"ostrand in the paper
\cite{gs}. We recall below the main definitions
and properties to make this paper self-consistent and readable. For
further details we refer to the paper \cite{gs} and to the lecture
notes \cite{Sj-reson}.

\subsection{The FBI Transform}
\renewcommand{\thetheorem}{\thesubsection.\arabic{theorem}}
\renewcommand{\theproposition}{\thesubsection.\arabic{proposition}}
\renewcommand{\thelemma}{\thesubsection.\arabic{lemma}}
\renewcommand{\thedefinition}{\thesubsection.\arabic{definition}}
\renewcommand{\thecorollary}{\thesubsection.\arabic{corollary}}
\renewcommand{\theequation}{\thesubsection.\arabic{equation}}
\renewcommand{\theremark}{\thesubsection.\arabic{remark}}

We define the FBI transform of a temperate distribution $ u $ as
$$ 
Tu(x, \lambda) = \int_{\R^{n}} e^{i \lambda \phi(x, y)} u(y) dy,
$$
where $ \lambda \geq 1 $ is a large parameter, $ \phi $ is a
holomorphic function such that $ \det \partial_{x}\partial_{y}\phi
\neq 0 $, $ \im \partial_{y}^{2}\phi > 0 $.

Here $ \partial_{x} $ denotes the complex derivative with respect to
the complex variable $ x $.
\begin{example}
\label{fbiclassic}
A typical phase function may be $ \phi(x, y) = \frac{i}{2} (x-y)^{2} $.
\end{example}
To the phase $ \phi $ there corresponds a weight function $ \Phi(x) $,
defined as
$$ 
\Phi(x) = \sup_{y \in \R^{n}} - \im \phi(x, y), \qquad x \in \C^{n}.
$$
\bigskip

We may take a slightly different perspective. Let us consider $
(x_{0}, \xi_{0})$  $\in \C^{2n} $ and a real valued real analytic
function $ \Phi(x) $ defined near $ x_{0} $, such that $ \Phi $ is
strictly plurisubharmonic and
$$ 
\frac{2}{i}\ \partial_{x}\Phi(x_{0}) =\xi_{0}.
$$
Denote by $ \psi(x, y) $ the holomorphic function defined near $
(x_{0}, \bar{x}_{0}) $ by 
\begin{equation}
\label{psi}
\psi(x, \bar{x}) = \Phi(x).
\end{equation}
Because of the plurisubharmonicity of $ \Phi $, we have 
\begin{equation}
\label{psixy}
\det \partial_{x} \partial_{y} \psi \neq 0 
\end{equation}
and 
\begin{equation}
\label{repsi}
\re \psi(x, \bar{y}) - \frac{1}{2}\left [ \Phi(x) + \Phi(y) \right]
\sim - |x - y |^{2}.
\end{equation}
To end this Section we recall the definition of $ s $--Gevrey wave
front set of a distribution.
\begin{definition}
\label{def:swf}
Let $ (x_{0}, \xi_{0}) \in U \subset T^{*}\R^{n}\setminus 0 $. We say
that $ (x_{0}, \xi_{0}) \notin WF_{s}(u) $ if there exist a
neighborhood $ \Omega $ of $ x_{0} - i \xi_{0} \in \C^{n} $ and
positive constants $ C_{1} $, $ C_{2} $ such that
$$ 
| e^{-\lambda \Phi_{0}(x)} Tu(x, \lambda) | \leq C_{1} e^{- \lambda^{1/s}/C_{2}},
$$
for every $ x \in \Omega $. Here $ T $ denotes the classical FBI
transform, i.e. that using the phase function of Example \ref{fbiclassic}.
\end{definition}
\subsection{Pseudodifferential Operators}
\setcounter{equation}{0}
\setcounter{theorem}{0}
\setcounter{remark}{0}

Let $ \lambda \geq 1 $ be a large positive parameter. We write 
$$ 
\tilde{D} = \frac{1}{\lambda}  D, \qquad  D = \frac{1}{i} \partial.
$$
Denote by $ q(x, \xi, \lambda) $ an analytic classical symbol and
by $ Q(x, \tilde{D}, \lambda) $ the formal classical
pseudodifferential operator associated to $ q $.

Using ``Kuranishi's trick'' one may represent  $ Q(x, \tilde{D},
\lambda) $ as
\begin{equation}
\label{qk}
Q u(x, \lambda) = \left( \frac{\lambda}{2 i\pi }\right )^{n} \int
  e^{2\lambda (\psi(x, \theta) - \psi(y, \theta))} \tilde{q}(x, \theta,
  \lambda) u(y) dy d\theta.
\end{equation}
Here $ \tilde{q} $ denotes the symbol of $ Q $ in the actual
representation. 

To realize the above operator we need a prescription for the
integration path.

This is accomplished by transforming the classical integration path
via the Kuranishi change of variables and eventually applying Stokes
theorem: 
\begin{equation}
\label{realization} 
Q^{\Omega}u(x, \lambda) = \left( \frac{\lambda}{\pi}\right)^{n}
\int_{\Omega} e^{2 \lambda \psi(x, \bar{y})} \tilde{q}(x, \bar{y}, \lambda)
u(y) e^{-2 \lambda\Phi(y)} L(dy),
\end{equation}
where $ L(dy) = (2i)^{-n} dy \wedge d\bar{y} $, the integration path
is $ \theta = \bar{y} $ and $ \Omega $ is a small neighborhood of $
(x_{0}, \bar{x}_{0}) $. We remark that $Q^{\Omega}u (x)$ is an holomorphic
function of $x$. 

\begin{definition}
\label{def:Hphi}
Let $ \Omega $ be an open subset of $ \C^{n} $.
We denote by $ H_{\Phi}(\Omega) $ the space of all holomorphic functions $ u(x,
\lambda) $ such that for every $ \epsilon > 0 $ and for every compact
$ K \subset\!\subset \Omega $ there exists a constant $ C > 0 $ such that
$$ 
| u(x, \lambda) | \leq C e^{\lambda (\Phi(x) + \epsilon)},
$$
for $ x \in K $ and $ \lambda \geq 1 $.
\end{definition}
\begin{remark}
\label{rem:1}
If $ \tilde{q} $ is a classical symbol of order zero, $ Q^{\Omega} $
is uniformly bounded as $ \lambda \rightarrow +\infty $, from $
H_{\Phi}(\Omega) $ into itself.
\end{remark}
\begin{remark}
\label{rem:2}
If the principal symbol is real, $ Q^{\Omega} $ is formally self
adjoint in $L^{2}(\Omega,$  $ e^{ -2 \lambda \Phi})$.
\end{remark}
\begin{remark}
\label{rem:3}
The definition \eqref{qk} of (the realization of) a pseudodifferential
operator on an open subset $ \Omega $ of $ \C^{n} $ is not the
classical one. Via the Kuranishi trick it can be reduced to the
classical definition. On the other hand using the function $ \psi $
allows us to use a weight function not explicitly related to an FBI
phase. This is useful since in the proof we deform the Lagrangean $
\Lambda_{\Phi_{0}} $, corresponding e.g. to the classical FBI phase,
and obtain a \textit{deformed} weight function which is useful in the
a priori estimate.
\end{remark}

For future reference we also recall that the identity operator can be
realized as
\begin{equation}
\label{idomega}
I^{\Omega}u(x, \lambda) = \left( \frac{\lambda}{\pi}\right)^{n}
\int_{\Omega} e^{2 \lambda \psi(x, \bar{y})} i(x, \bar{y}, \lambda)
e^{-2\lambda \Phi(y)} u (y, \lambda) L(dy),
\end{equation}
for a suitable analytic classical symbol $ i(x, \xi, \lambda) $.
Moreover we have the following estimate (see \cite{gs} and
\cite{Sj-Ast})
\begin{equation}
\label{errorest}
\| I^{\Omega} u - u \|_{\Phi - d^{2}/C} \leq C' \| u \|_{\Phi + d^{2}/C},
\end{equation}
for suitable positive constants $ C $ and $ C' $. Here we denoted by
\begin{equation}
\label{d}
d(x) = \dist(x, \complement \Omega),
\end{equation}
the distance of $ x $ to the boundary of $ \Omega $, and by
\begin{equation}
\label{eq:phinorm}
\| u \|_{\Phi}^{2} = \int_{\Omega} e^{-2\lambda \Phi(x)} |u(x) |^{2} L(dx).
\end{equation}

\subsection{Some Pseudodifferential Calculus}
\setcounter{equation}{0}
\setcounter{theorem}{0}
\setcounter{proposition}{0}  
\setcounter{lemma}{0}
\setcounter{corollary}{0} 
\setcounter{definition}{0}
\setcounter{remark}{0}

We start with a proposition on the composition of two
pseudodifferential operators.
\begin{proposition}[\cite{gs}]
\label{composition}
Let $ Q_{1} $ and $ Q_{2} $ be of order zero. Then they can be
composed and
$$ 
Q_{1}^{\Omega} \circ Q_{2}^{\Omega} = (Q_{1} \circ Q_{2})^{\Omega} + R^{\Omega},
$$
where $ R^{\Omega} $ is an error term, i.e. an operator whose norm is
$ \mathscr{O}(1) $ as an operator from $ H_{\Phi + (1/C) d^{2}} $ to $
H_{\Phi - (1/C) d^{2}} $ 
\end{proposition}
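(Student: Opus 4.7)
The plan is to write $Q_1^{\Omega} \circ Q_2^{\Omega}$ as a double contour integral in $\C^n$, perform the intermediate integration by a Kuranishi-type reduction combined with complex stationary phase, read off the principal term as $(Q_1 \circ Q_2)^{\Omega}$, and use the quadratic decay estimate \eqref{repsi} to control the resulting error $R^{\Omega}$.

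First, inserting the realization \eqref{realization} for both factors gives
\begin{equation*}
Q_1^{\Omega} Q_2^{\Omega} u(x) = \Bigl( \frac{\lambda}{\pi} \Bigr)^{2n} \iint_{\Omega \times \Omega} e^{2 \lambda \Psi(x, y, z)} \tilde{q}_1(x, \bar{y}) \tilde{q}_2(y, \bar{z}) u(z) e^{-2 \lambda \Phi(z)} L(dy) L(dz),
\end{equation*}
where $\Psi(x, y, z) = \psi(x, \bar{y}) + \psi(y, \bar{z}) - \Phi(y)$. I would then apply complex stationary phase in $y$ for each fixed $(x, z)$ close to the diagonal: by \eqref{psixy} and the strict plurisubharmonicity of $\Phi$, the equations $\partial_y \Psi = \partial_{\bar{y}} \Psi = 0$ have a unique nondegenerate critical point $y_c = y_c(x,z)$ with $y_c(x, x) = x$, and a direct computation using $\psi(y, \bar{y}) = \Phi(y)$ shows that the critical value is $\Psi(x, y_c, z) = \psi(x, \bar{z})$. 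Stationary phase then produces the principal contribution in exactly the form \eqref{realization}, with a new analytic classical symbol $\widetilde{q_1 \# q_2}(x, \bar{z}, \lambda) \sim \tilde{q}_1(x, \bar{z}) \tilde{q}_2(x, \bar{z}) + \mathscr{O}(\lambda^{-1})$; this is by definition the symbol of $Q_1 \circ Q_2$.

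Second, for the error term I would use \eqref{repsi} applied to both $\psi(x, \bar{y})$ and $\psi(y, \bar{z})$, obtaining
\begin{equation*}
2 \re \Psi(x, y, z) - \Phi(x) - \Phi(z) \leq -c \bigl( |x - y|^2 + |y - z|^2 \bigr).
\end{equation*}
Hence the contribution to $R^{\Omega}$ from $y$ at distance $d(y)$ from $\partial \Omega$, which is precisely the region where the stationary-phase reduction picks up boundary terms, carries the exponential gain $e^{-c \lambda d(y)^2}$. A weighted Schur-test argument of the same type as the one yielding \eqref{errorest} then converts this pointwise gain into the operator bound $R^{\Omega} \colon H_{\Phi + d^2/C}(\Omega) \to H_{\Phi - d^2/C}(\Omega)$ uniformly in $\lambda$, which is the claim.

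The main obstacle is the second step: for a complex phase whose imaginary part is only positive semidefinite, the stationary-phase reduction requires deforming the $y$-contour to the critical manifold $\{\partial_y \Psi = 0\}$ and checking that the resulting holomorphic symbol $\widetilde{q_1 \# q_2}$ is a genuine analytic classical symbol, not merely a formal asymptotic series. The Kuranishi trick---rewriting the phase difference $\psi(x, \bar{y}) + \psi(y, \bar{z}) - \psi(x, \bar{z}) - \Phi(y)$ as an explicit bilinear form in $y - y_c$ via an analytic change of variables---is what makes this reduction explicit, and the quadratic decay in \eqref{repsi} is what makes the boundary error absorbable into the $d^2$-loss in the weight.
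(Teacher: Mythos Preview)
The paper does not supply its own proof of this proposition: it is stated with a citation to \cite{gs} and no argument is given. Your sketch is the standard Grigis--Sj\"ostrand approach from that reference---write the composition as a double integral, perform complex stationary phase in the intermediate variable after a contour deformation/Kuranishi reduction, and control the boundary remainder via the quadratic decay \eqref{repsi}---so there is nothing to compare against in the present paper, and your outline is consistent with the source it cites.

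One small point worth tightening: in the last paragraph you describe the contour deformation and the analyticity of the composed symbol as ``the main obstacle,'' which is accurate, but your sketch stops short of saying how the deformation is actually carried out in this setting (the good contour is obtained by moving $\theta=\bar y$ to the complex critical manifold within the region where \eqref{repsi} guarantees exponential damping, and Stokes' theorem together with holomorphy of the integrand away from $\partial\Omega$ is what localizes the error to a boundary contribution). If you intend this as a full proof rather than a roadmap, that step should be written out; as a proposal it is fine.
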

We shall need also a lower bound for an elliptic operator of order
zero.
\begin{proposition}[\cite{abc}]
\label{lowerbound}
Let $ Q $ a zero order pseudodifferential operator defined on $ \Omega
$ as above. Assume further that its principal symbol $ q_{0}(x, \xi,
\lambda) $ satisfies
$$ 
| {q_{0}}_{|_{\Lambda_{\Phi} \cap \pi^{-1}(\Omega)}} | \geq c_{0} > 0.
$$
Here $ \pi $ denotes the projection onto the first factor in $
\C^{n}_{x} \times \C^{n}_{\xi} $. Then
\begin{equation}
\label{lb}
\| u \|_{\tilde{\Phi}} + \| Q^{\Omega} u \|_{\Phi} \geq C \| u \|_{\Phi},
\end{equation}
where 
\begin{equation}
\label{phitilde}
\tilde{\Phi}(x) = \Phi(x) + \frac{1}{C} d^{2}(x),
\end{equation}
and $ d $ has been defined in (\ref{d}).
\end{proposition}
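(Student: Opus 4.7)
The plan is to build an analytic pseudodifferential parametrix for $Q^{\Omega}$ and then recover $u$ modulo controllable remainders by means of the approximate identity $I^{\Omega}$ from \eqref{idomega} together with its error bound \eqref{errorest}.

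First I would construct the parametrix. Since $q_{0}$ is an analytic classical symbol of order zero and $|q_{0}|\ge c_{0}>0$ on $\Lambda_{\Phi}\cap \pi^{-1}(\Omega)$, the function $1/q_{0}$ is a well-defined holomorphic symbol of order zero in a conical neighborhood of this set. By the usual recursive procedure of the analytic symbol calculus one completes $1/q_{0}$ into a classical analytic symbol $e(x,\xi,\lambda)$; let $E^{\Omega}$ be its realization in the sense of \eqref{realization}. Proposition \ref{composition} then yields
$$
E^{\Omega}\circ Q^{\Omega} \;=\; I^{\Omega} + R^{\Omega},
$$
where $R^{\Omega}$ is an error operator: uniformly in $\lambda$, it is bounded from $H_{\Phi+d^{2}/C}$ into $H_{\Phi-d^{2}/C}$.

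Next I would split
$$
u \;=\; (u - I^{\Omega}u) \;+\; E^{\Omega}Q^{\Omega}u \;-\; R^{\Omega}u,
$$
and bound each piece in the $\|\cdot\|_{\Phi}$-norm. From $\Phi-d^{2}/C\le \Phi$ one has $\|\cdot\|_{\Phi}\le \|\cdot\|_{\Phi-d^{2}/C}$, so \eqref{errorest} gives
$$
\|u - I^{\Omega}u\|_{\Phi} \;\le\; C\,\|u\|_{\Phi+d^{2}/C} \;=\; C\,\|u\|_{\tilde{\Phi}},
$$
and similarly $\|R^{\Omega}u\|_{\Phi}\le C\,\|u\|_{\tilde{\Phi}}$ from the mapping property of $R^{\Omega}$. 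Finally, since $E^{\Omega}$ has order zero, the $L^{2}$-analog of Remark \ref{rem:1} gives $\|E^{\Omega}Q^{\Omega}u\|_{\Phi}\le C\,\|Q^{\Omega}u\|_{\Phi}$. Adding these three bounds produces \eqref{lb}, after enlarging $C$ in \eqref{phitilde} if necessary so that the two boundary losses can be absorbed into the single weight $\tilde{\Phi}$.

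The main obstacle is the parametrix construction itself: one must exhibit the analytic symbol estimates that make the realization of $E^{\Omega}$ converge and that make the composition formula of Proposition \ref{composition} apply with the claimed mapping of $R^{\Omega}$. Beyond that, the proof is accounting: aligning the two $d^{2}/C$ losses (one from the composition remainder, one from the identity approximation) by enlarging $C$ and possibly shrinking $\Omega$, so that they can be controlled by the single norm $\|u\|_{\tilde{\Phi}}$ appearing on the left-hand side of \eqref{lb}.
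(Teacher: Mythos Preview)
Your argument is correct, but it is not the route taken in the paper. You invoke the full analytic parametrix construction and the composition calculus (Proposition~\ref{composition}) to produce $E^{\Omega}Q^{\Omega}=I^{\Omega}+R^{\Omega}$, then split $u$ accordingly. The paper instead avoids building any parametrix: it compares $Q^{\Omega}$ directly with the multiplication operator ${q_{0}}_{|_{\Lambda_{\Phi}}}\cdot I^{\Omega}$, observes that the symbol of the difference is $\mathscr{O}(|x-y|+\lambda^{-1})$, and by a straightforward Schur-type kernel estimate obtains
\[
\|Q^{\Omega}u-{q_{0}}_{|_{\Lambda_{\Phi}}}I^{\Omega}u\|_{\Phi}\le C\lambda^{-1/2}\|u\|_{\Phi}.
\]
From this and $|q_{0}|\ge c_{0}$ one gets $\|Q^{\Omega}u\|_{\Phi}\ge c_{0}\|u\|_{\Phi}-C\|u\|_{\tilde{\Phi}}-C\lambda^{-1/2}\|u\|_{\Phi}$, which is \eqref{lb} for $\lambda$ large. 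The paper's approach is more elementary and self-contained (no recursive symbol construction, no appeal to $L^{2}$-boundedness of a new realization $E^{\Omega}$), and it exhibits an explicit $\lambda^{-1/2}$ gain; your approach is the standard ``elliptic $\Rightarrow$ parametrix'' argument and is cleaner conceptually, but it leans on machinery (convergence of the analytic symbol recursion, the $L^{2}$ version of Remark~\ref{rem:1}) that you only cite rather than verify.
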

\begin{proof}
We have
\begin{multline*} 
Q^{\Omega} u(x, \lambda) - {q_{0}}_{|_{\Lambda_{\Phi}}}(x, \lambda)
I^{\Omega} u(x, \lambda) \\
= \left( \frac{\lambda}{\pi} \right)^{n} \int_{\Omega}
e^{2\lambda \psi(x, \bar{y})} \left [ q(x, \bar{y}, \lambda) -
  {q_{0}}_{|_{\Lambda_{\Phi}}} (x, \lambda) i(x, \bar{y}, \lambda)
\right]
\\
\cdot e^{-2\lambda \Phi(y)} u(y) L(dy).
\end{multline*}
The absolute value of the term in square brackets may be estimated by
$ C (|x -y|+\lambda^{-1})$. Then
\begin{multline*}
\| Q^{\Omega} u - {q_{0}}_{|_{\Lambda_{\Phi}}} I^{\Omega} u
\|_{\Phi}^{2}
\leq C \lambda^{-2} \| u \|_{\Phi}^{2}
\\
+C  \int_{\Omega} \left | \left(\frac{\lambda}{\pi}\right)^{n}
  \int_{\Omega} 
e^{-\lambda \Phi(x) + 2 \lambda \psi(x, \bar{y}) - \lambda \Phi(y)} |x
- y| e^{-\lambda \Phi(y)} u(y) L(dy) \right|^{2} L(dx) \\
\leq
C \left(\frac{\lambda}{\pi}\right)^{2n} \int_{\Omega} 
\left( \int_{\Omega} e^{-\lambda/C |x-y|^{2}} |x-y| L(dy) \right)
\\
\cdot
\left( \int_{\Omega} e^{-\lambda/C |x-y|^{2}} |x-y| e^{-2\lambda
    \Phi(y)} |u(y)|^{2}  L(dy) \right) L(dx)+C\lambda^{-2} \| u \|_{\Phi}^{2}
\\
\leq C \lambda^{-1} \| u \|_{\Phi}^{2}.
\end{multline*}
Using (\ref{errorest}) we may conclude that
\begin{multline*}
\| Q^{\Omega} u \|_{\Phi} \geq \| {q_{0}}_{|_{\Lambda_{\Phi}}}
I^{\Omega} u \|_{\Phi} - C \lambda^{-1/2} \| u \|_{\Phi} 
\\
\geq 
\| {q_{0}}_{|_{\Lambda_{\Phi}}}  u \|_{\Phi} - \| {q_{0}}_{|_{\Lambda_{\Phi}}}
(I^{\Omega} - 1) u \|_{\Phi} -  C \lambda^{-1/2} \| u \|_{\Phi} 
\\
\geq
c_{0} \| u \|_{\Phi} - C \| u \|_{\tilde{\Phi}} - C \lambda^{-1/2} \|
u \|_{\Phi}.
\end{multline*}
This proves the assertion.
\end{proof}

\subsection{An \textit{a priori} Estimate for Sums of Squares}
\setcounter{equation}{0}
\setcounter{theorem}{0}
\setcounter{proposition}{0}  
\setcounter{lemma}{0}
\setcounter{corollary}{0} 
\setcounter{definition}{0}
\setcounter{remark}{0}

Consider now the vector fields $ X_{j} $ defined in Section
1. Following \cite{abc} we state the FBI version of the
estimate \eqref{eq:Cinftysubell}.

\begin{theorem}
\label{apriorifriendly}
Let $ P^{\Omega} $ be the $ \Omega $-realization of $ P $ (see
equation \eqref{realization}.) Note that, arguing as in \cite{gs}
we have that
\begin{equation}
\label{eq:Preal}
P^{\Omega} = \sum_{j=1}^{N} (X_{j}^{\Omega})^{2} + \mathscr{O}(\lambda^{2}),
\end{equation}
where $\mathscr{O}(\lambda^{2}) $ is continuous from $
H_{\tilde{\Phi}} $ to $ H_{\Phi - (1/C)d^{2}} $ with norm boun\-ded by $
C' \lambda^{2} $, $ \tilde{\Phi} $ given by \eqref{phitilde}. 

Let $ \Omega_{1} \subset\!\subset \Omega $.
Then
\begin{equation}
\label{apriorifin}
\lambda^{\frac{2}{r}} \| u \|_{\Phi}^{2} +
\sum_{j=1}^{N} \| X_{j}^{\Omega} u\|_{\Phi}^{2} 
\leq C \left( \langle P^{\Omega}u , u\rangle_{\Phi} + \lambda^{\alpha}
  \|u\|_{\Phi, \Omega\setminus\Omega_{1}}^{2} \right),
\end{equation}
where $ \alpha $ is a positive integer, $ u \in L^{2}(\Omega, e^{-2
\Phi}L(dx)) $ and $ r = \nu((x_{0}, \xi_{0})) $. 
\end{theorem}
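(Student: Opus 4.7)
The proof transfers the classical subelliptic estimate (1.0.2) to the Grigis--Sj\"ostrand calculus on $H_{\Phi}(\Omega)$ in three steps.

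First, by the decomposition \eqref{eq:Preal} one has $P^{\Omega} = \sum_{j}(X_{j}^{\Omega})^{2} + R$, where $R$ has operator norm $O(\lambda^{2})$ from $H_{\tilde{\Phi}}$ into $H_{\Phi - d^{2}/C}$, with $\tilde{\Phi} = \Phi + d^{2}/C$. Pairing $\langle Ru, u\rangle_{\Phi}$ in the weighted inner product, the weight gain $e^{-2\lambda d^{2}/C}$ on $\Omega_{1}\subset\!\subset\Omega$ dominates the $\lambda^{2}$ loss, so the contribution of $R$ is absorbed into the localization term $\lambda^{\alpha} \|u\|_{\Phi,\Omega\setminus\Omega_{1}}^{2}$ for a sufficiently large integer $\alpha$. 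It therefore suffices to prove \eqref{apriorifin} with $P^{\Omega}$ replaced by $\sum_{j}(X_{j}^{\Omega})^{2}$.

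Second, I would integrate by parts in the weighted pairing. Since each $X_{j}$ has purely imaginary principal symbol, Remark \ref{rem:2} applied to $iX_{j}$ yields $(X_{j}^{\Omega})^{*} = -X_{j}^{\Omega} + Z_{j}$ with $Z_{j}$ of order zero and uniformly bounded on $H_{\Phi}$. Hence
$$
\mathrm{Re}\,\langle -\textstyle\sum_{j}(X_{j}^{\Omega})^{2} u,u\rangle_{\Phi} \;\geq\; \textstyle\sum_{j} \|X_{j}^{\Omega} u\|_{\Phi}^{2} \;-\; C\|u\|_{\Phi}\bigl(\textstyle\sum_{j}\|X_{j}^{\Omega} u\|_{\Phi} + \|u\|_{\Phi}\bigr),
$$
and Cauchy--Schwarz together with absorption yields the bound on $\sum_{j}\|X_{j}^{\Omega} u\|_{\Phi}^{2}$ required by \eqref{apriorifin}.

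Third, and most delicately, I would recover the subelliptic factor $\lambda^{2/r}\|u\|_{\Phi}^{2}$ by a commutator argument patterned on Rothschild--Stein and Bolley--Camus--Nourrigat, reinterpreted inside the GS calculus. Condition (H) and the definition of $r=\nu(x_{0},\xi_{0})$ furnish an iterated Poisson bracket $Y$ of the $X_{j}$ of length $\ell\leq r$ whose symbol is elliptic of order $1$ on $\Lambda_{\Phi}$ near $(x_{0},\xi_{0})$; the corresponding iterated commutator $Y^{\Omega}$ assembled from the $X_{j}^{\Omega}$ via Proposition \ref{composition} then satisfies the elliptic lower bound $\lambda\|u\|_{\Phi}\leq C(\|Y^{\Omega}u\|_{\Phi} + \|u\|_{\tilde{\Phi}})$ of Proposition \ref{lowerbound}. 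Successive integration by parts bounds $\|Y^{\Omega}u\|_{\Phi}^{2}$ by a weighted geometric-mean combination of $\langle P^{\Omega}u,u\rangle_{\Phi}$ and $\sum_{j}\|X_{j}^{\Omega}u\|_{\Phi}^{2}$ with a total loss of $\lambda^{2(1-1/r)}$; combining this with the lower bound for $Y^{\Omega}$ produces the gain $\lambda^{2/r}$. All $\tilde{\Phi}$-weighted and $H_{\Phi-d^{2}/C}$ remainders, being concentrated near $\partial\Omega$, collapse into $\lambda^{\alpha}\|u\|_{\Phi,\Omega\setminus\Omega_{1}}^{2}$. The main obstacle is precisely this third step: one has to propagate the Rothschild--Stein subelliptic gain through the GS calculus, track the powers of $\lambda$ across the $\ell\leq r$ successive commutators (each producing $O(1)$ errors in Proposition \ref{composition}), and systematically absorb the $d^{2}$-weighted error terms into the boundary remainder without degrading the final gain $\lambda^{2/r}$.
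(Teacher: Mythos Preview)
The paper does not give its own proof of this theorem: it is stated in Section~2.4 as a known result, with the argument deferred entirely to \cite{abc} (Albano--Bove--Chinni). There is therefore no in-paper proof to compare against; the theorem functions here as a quoted black box that is then applied in Sections~3 and~4.

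Your three-step outline is a reasonable reconstruction of how such an estimate is established, and is in the spirit of what \cite{abc} does. Steps~1 and~2 are routine and essentially correct (in Step~1 one should be slightly careful that $R$ is not local, so the piece of $\langle Ru,u\rangle_{\Phi}$ living over $\Omega\setminus\Omega_{1}$ still carries a factor $\|u\|_{\Phi,\Omega}$ rather than $\|u\|_{\Phi,\Omega\setminus\Omega_{1}}$; this is handled by one more use of the $d^{2}$-weight and Cauchy--Schwarz). Step~3, as you yourself flag, is where all the content lies, and your description (``successive integration by parts bounds $\|Y^{\Omega}u\|_{\Phi}^{2}$ by a weighted geometric-mean combination \ldots\ with a total loss of $\lambda^{2(1-1/r)}$'') is more a slogan than an argument. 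The inductive bookkeeping over bracket length, the interpolation that actually produces the exponent $1/r$, and the control of the $\mathscr{O}(1)$ composition errors from Proposition~\ref{composition} at each of the $r$ stages all have to be spelled out; these details are precisely what \cite{abc} supplies and are not reproducible from the sketch as written.
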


\section{Proof of Theorem \ref{th:1}}
\renewcommand{\thetheorem}{\thesection.\arabic{theorem}}
\renewcommand{\theproposition}{\thesection.\arabic{proposition}}
\renewcommand{\thelemma}{\thesection.\arabic{lemma}}
\renewcommand{\thedefinition}{\thesection.\arabic{definition}}
\renewcommand{\thecorollary}{\thesection.\arabic{corollary}}
\renewcommand{\theequation}{\thesection.\arabic{equation}}
\renewcommand{\theremark}{\thesection.\arabic{remark}}
\renewcommand{\theconjecture}{\thesection.\arabic{conjecture}}
\setcounter{equation}{0}
\setcounter{theorem}{0}
\setcounter{proposition}{0}
\setcounter{lemma}{0}
\setcounter{corollary}{0}
\setcounter{definition}{0}
\setcounter{remark}{0}

In order to prove Theorem \ref{th:1} we construct a deformation of $
\Lambda_{\Phi_{0}} $ following the ideas in \cite{gs} (see also
\cite{abc}.) 

Let us consider the ``sum of squares of vector fields'' operator $ P $
defined in \ref{eq:P}. Let $ (x_{0}, \xi_{0}) $ be a characteristic
point of $ P $ and let $ r = \nu(x_{0}, \xi_{0}) $.

We perform an FBI transform of the form
$$ 
Tu(x, \lambda) = \int_{\R^{n}} e^{ i\lambda \phi(x, y)} u(y) dy,
$$
where $ u $ is a compactly supported distribution and $ \phi(x, y) $ is
a phase function. Even though it does not really matter which phase
function we use, the classical phase function will be employed:
\begin{equation}
\label{eq:4.1}
\phi_{0}(x, y) = \frac{i}{2} (x - y)^{2}, \qquad x \in \C^{n}, y \in \R^{n}. 
\end{equation}
Let us denote by $ \Omega $ an open neighborhood of the point $
\pi_{x}\mathscr{H}_{T}(x_{0}, \xi_{0}) $ in $ \C^{n} $. Here $ \pi_{x}
$ denotes the space projection $ \pi_{x} \colon \C^{n}_{x} \times
\C^{n}_{\xi} \rightarrow \C^{n}_{x} $ and $ \mathscr{H}_{T} $ is the
complex canonical transformation associated to $ T $: 
$$ 
\mathscr{H}_{T} \colon \left\{ \left( y, - \frac{2}{i}
    \frac{\partial \Phi}{\partial y}\right) \right\} \rightarrow 
\left\{ \left( x, \frac{2}{i}
    \frac{\partial \Phi}{\partial x}\right) \right\},
$$
($ \Phi(x, y) = - \im \phi(x, y) $) i.e., in the classical case, once
we restrict to $ \R^{2n} $, 
$$ 
\mathscr{H}_{0} (y, \eta) = (y-i\eta, \eta), \quad (y, \eta) \in
\R^{2n}. 
$$
For the sake
of simplicity we denote by $ x_{0} \in \C^{n} $ the point
$\pi_{x}\mathscr{H}_{0}(x_{0}, \xi_{0}) $.

Let $ \Phi_{0}(x, y) = - \im \phi_{0}(x, y) = - \frac{1}{2} (x' -
y)^{2} + \frac{1}{2} x''^{2} $, where $ y \in \R^{n} $, $ x = x' + i
x'' \in \C^{n} $. We write also
$$ 
\Phi_{0}(x) = \text{c.v.}_{y \in \R^{n}} \Phi_{0}(x, y)
$$
(the critical value of $ \Phi_{0} $ w.r.t. $ y $.)

For $ \lambda \geq 1 $ let us consider a real analytic function
defined near the point $ \mathscr{H}_{0}(x_{0}, \xi_{0}) = (x_{0} - i
\xi_{0}, \xi_{0}) \in \Lambda_{\Phi_{0}} $, say $ h(x, \xi, \lambda) $. 
Solve, for small positive $ t $, the Hamilton-Jacobi problem
\begin{equation}
\label{eq:hj}
\left\{
\begin{matrix}
2 \dfrac{\partial \Phi}{\partial t}(t, x, \lambda) = h \left(x,
  \dfrac{2}{i} \dfrac{\partial \Phi}{\partial x}(t, x, \lambda), \lambda \right) \\
\Phi(0, x, \lambda) = \Phi_{0}(x) \hfill
\end{matrix}
\right . .
\end{equation}
This is easy to solve since $ h $ is real analytic. Set
$$
\Phi_{t}(x, \lambda) = \Phi(t, x, \lambda).
$$
We have
$$
\Lambda_{\Phi_{t}} = \exp\left(i t H_{h}\right) \Lambda_{\Phi_{0}}
$$
We choose the function $ h $ as
\begin{equation}
\label{eq:h}
h (x, \xi, \lambda) = \lambda^{-\frac{r-1}{r}} | x - x_{0} |^{2} \quad
\text{on } \  \Lambda_{\Phi_{0}}.
\end{equation}
Keeping in mind the definition of $ \Lambda_{\Phi_{0}} $, we have
that, as a function in $ \R^{2n} $
\begin{equation}
\label{eq:hLphi0}
h (x, \xi, \lambda) = \lambda^{-\frac{r-1}{r}} \left[ | x - x_{0} |^{2} + | \xi
  - \xi_{0} |^{2} \right].
\end{equation}
The function $ \Phi_{t} $ can be expanded as a power series in the
variable $ t $ using both equation \eqref{eq:hj} and the Fa\`a di
Bruno formula to obtain
\begin{equation}
\label{eq:Phit}
\Phi_{t}(x, \lambda) = \Phi_{0}(x) + \frac{t}{2}  \ h(\cdot, \cdot,
\lambda)_{\big |_{\Lambda_{\Phi_{0}}}} + \mathscr{O}(\lambda^{-1}) ,
\end{equation}
where $ h $ on $ \Lambda_{\Phi_{0}} $ is given by \eqref{eq:hLphi0}. 

Our purpose is to use the estimate \eqref{apriorifin} where the weight
function $ \Phi $ has been replaced by the weight $ \Phi_{t} $. This
is possible using the phase $ \psi_{t} $ in \eqref{qk} and realizing
the operator as in \eqref{realization}. Here $ \psi_{t} $ is defined
as the holomorphic extension of $ \psi_{t}(x, \bar{x}) = \Phi_{t}(x)
$. 

We need to restrict the symbol of both $ P $ and $ P + Q $ to $
\Lambda_{\Phi_{t}} $; denote by $ P^{t} $, $ Q^{t} $ the symbols of $
P $, $ Q $ restricted to $\Lambda_{\Phi_{t}} $. 

Noting that
\begin{multline*}
X_{j}^{2}(x, \frac{2}{i} \partial_{x}\Phi_{t}(x, \lambda), \lambda) =
X_{j}^{2}(x, \frac{2}{i} \partial_{x}\Phi_{0}(x), \lambda) 
\\
+ 2 t
X_{j}(x, \frac{2}{i} \partial_{x}\Phi_{0}(x), \lambda)
\langle \partial_{\xi} X_{j}(x,
\frac{2}{i} \partial_{x}\Phi_{0}(x),
\lambda), \frac{2}{i} \partial_{x}\partial_{t} {\Phi_{t}(x, \lambda)}_{\big
  |_{t=0}} \rangle 
\\
+ \mathscr{O}(t^{2} \lambda^{2/r}),  
\end{multline*}
We then deduce that
\begin{equation}
\label{lt}
P^{t}(x, \xi, \lambda) = \lambda^{2} P(x, \xi) + t R(x, \xi, \lambda) +
\mathscr{O}(t^{2} \lambda^{\frac{2}{r}}),
\end{equation}
where
$$
R(x, \xi, \lambda) = \lambda^{\frac{1}{r}} \sum_{j=1}^{N} a_{j}(x,
\xi, \lambda) X_{j}(x, \xi, \lambda).
$$
The analytic extension of $ P^{t} $ is the symbol appearing in the $
\Omega $-realization of $ P^{t} $, $ {P^{t}}^{\Omega} $. We point out
that the principal symbol of $ P^{t} $ satisfies the assumptions of
Theorem \ref{apriorifriendly} and, using the a priori inequality
(\ref{apriorifin}), we can deduce an estimate of the form (\ref{apriorifin})
for $ P^{t} $ in the $ H_{\Phi_{t}} $ spaces.

Denote by $ \theta $ the order of the pseudodifferential operator $ Q
$. We have
\begin{multline*}
\lambda^{\frac{2}{r}}\| u \|_{\Phi_{t}}^{2} + \sum_{j=1}^{N}
\|X_{j}^{\Omega} u\|^{2}_{\Phi_{t}} 
\\
 \leq C \left( |\langle ({P^{t}}^{\Omega} - t R^{\Omega} - \mathscr{O}(t^{2} \lambda^{\frac{2}{r}}))u
  , u\rangle_{\Phi_{t}}| + \lambda^{\alpha}
  \|u\|_{\Phi_{t}, \Omega\setminus\Omega_{1}}^{2} \right)
\\
=
C \left(| \langle ({P^{t}}^{\Omega} + {Q^{t}}^{\Omega} - t R^{\Omega} -
  \mathscr{O}(t^{2} \lambda^{\frac{2}{r}}) - {Q^{t}}^{\Omega})u
  , u\rangle_{\Phi_{t}}| \right . 
\\
\left .
+ \lambda^{\alpha}
  \|u\|_{\Phi_{t}, \Omega\setminus\Omega_{1}}^{2} \right)
\end{multline*}
The fourth term in the left hand side of the scalar product above is
easily absorbed on the left provided $ t $ is small enough. The fifth
term is also absorbed since, being $ Q $ of order $ \theta $, $ \|
{Q^{t}}^{\Omega} u \|_{\Phi_{t}, \Omega} \leq \lambda^{\theta} C \| u
\|_{\Phi_{t}, \Omega} $. 

Let us consider the third term in the scalar product above. 
By Proposition \ref{composition}, we have
$$ 
R^{\Omega} = \sum_{j=1}^{N} a_{j}^{\Omega}(x, \tilde{D}, \lambda)
X_{j}^{\Omega}(x, \tilde{D}, \lambda) + \mathscr{O}(\lambda),
$$
where $ \mathscr{O}(\lambda) $ denotes an operator from $ H_{\Phi_{t}
  + \frac{1}{C} d^{2}} $ to $ H_{\Phi_{t} - \frac{1}{C} d^{2}} $ whose
norm is bounded by $ C \lambda $. Hence
$$
t | \langle R^{\Omega}u, u\rangle_{\Phi_{t}} | 
\leq C t \Big(
\lambda^{\frac{2}{r}}\| u \|_{\Phi_{t}}^{2} + \sum_{j=1}^{N}
\|X_{j}^{\Omega} u\|^{2}_{\Phi_{t}} + \lambda^{2} \|u
\|^{2}_{\tilde{\Phi}_{t}} \Big).
$$
Hence we deduce that there exist a neighborhood $ \Omega_{0} $ of $ x_{0} $, a positive
number $ \delta $ and a positive integer $ \alpha $
such that, for every $ \Omega_{1} \subset\!\subset \Omega_{2}
\subset\!\subset\Omega \subset \Omega_{0} $, there exists a constant $ C > 0
$ such that, for $ 0 < t < \delta $, we have 
\begin{equation}
\label{scalprodt}
\lambda^{\frac{2}{r}}\| u \|_{\Phi_{t}, \Omega_{1}} 
 \leq C \left( \| {(P + Q)^{t}}^{\Omega} u\|_{\Phi_{t}, \Omega_{2}} +
   \lambda^{\alpha} \|u\|_{\Phi_{t}, \Omega\setminus\Omega_{1}} \right). 
\end{equation}
In other words Theorem \ref{apriorifriendly} holds for the perturbed
operator. 

Using \eqref{scalprodt} we may finish the proof of Theorem \ref{th:1}.

By assumption $\| {(P + Q)^{t}}^{\Omega} u\|_{\Phi_{t}, \Omega_{2}}
\leq C e^{- \lambda/C}  $, since $ \Lambda_{\Phi_{t}} $ is a small
perturbation of $ \Lambda_{\Phi_{0}} $ when $ t $ is small. 

By our choice of $ h $ (see \eqref{eq:h}) it is also straightforward
that $\|u\|_{\Phi_{t}, \Omega\setminus\Omega_{1}} \leq C e^{-
  \lambda^{1/r}/C}  $. Thus we obtain that
$$ 
\| u \|_{\Phi_{t}, \Omega_{1}} \leq C_{1} e^{- \lambda^{1/r} /C_{1}} .
$$
On the other hand $ \Phi_{t}(x, \lambda) = \Phi_{0}(x) + \frac{t}{2}
\ h(\cdot, \cdot, \lambda)_{\big |_{\Lambda_{\Phi_{0}}}} +
\mathscr{O}(\lambda^{-1}) $, so that, if we are close enough to the
base point on $ \Lambda_{\Phi_{0}} $, i.e. for $ x \in \Omega_{3} $,
for a fixed small positive value of $ t $, we have
$$ 
\Phi_{t}(x) - \Phi_{0}(x) \leq \frac{\lambda^{-1 + 1/r}}{C_{2}(t)}.
$$
Therefore $ \| u \|_{\Phi_{0}, \Omega_{3}} \leq c e^{-\lambda^{1/r}/c}
$, which proves Theorem \ref{th:1}.

\section{Proof of Theorem \ref{th:2}}
\renewcommand{\thetheorem}{\thesection.\arabic{theorem}}
\renewcommand{\theproposition}{\thesection.\arabic{proposition}}
\renewcommand{\thelemma}{\thesection.\arabic{lemma}}
\renewcommand{\thedefinition}{\thesection.\arabic{definition}}
\renewcommand{\thecorollary}{\thesection.\arabic{corollary}}
\renewcommand{\theequation}{\thesection.\arabic{equation}}
\renewcommand{\theremark}{\thesection.\arabic{remark}}
\renewcommand{\theconjecture}{\thesection.\arabic{conjecture}}
\setcounter{equation}{0}
\setcounter{theorem}{0}
\setcounter{proposition}{0}
\setcounter{lemma}{0}
\setcounter{corollary}{0}
\setcounter{definition}{0}
\setcounter{remark}{0}

We are going to proceed in the same way as in the previous section,
but using the (order zero) function $ h $ of the assumption. First of
all we deform $ \Lambda_{\Phi_{0}} $ according to \eqref{eq:hj}. Next
we want to deduce a priori estimates for $ P + Q $ where the weight
function $ \Phi_{0} $ is replaced by $ \Phi_{t} $. For the sake of
simplicity let us write \eqref{eq:riprod} as
\begin{equation}
\label{eq:riprod2}
\{ h (x, \xi) , X_{j}(x, \xi) \} = \alpha (x, \xi) X (x, \xi) ,
\end{equation}
where $ X $ denotes a vector whose components are the symbols of the
vector fields and $ \alpha $ is a $ N \times N $ matrix with entries
being real analytic symbols. As before we have $ \Lambda_{\Phi_{t}} =
\exp(i t H_{h}) \Lambda_{\Phi_{0}} $. 

Denote by $ Y_{j}^{t} $, $ j = 1, \ldots, N $, the restriction to $
\Lambda_{\Phi_{t}} $ of $ X_{j} $. We have $ Y_{j}^{t} = X_{j} \circ
\exp(i t H_{h}) $, so that, by our assumptions,
$$ 
\partial_{t} Y^{t} = i \{ h , X\} \circ \exp(i t H_{h}).
$$
We deduce that
$$ 
\left\{
\begin{matrix}
2 \dfrac{\partial Y^{t} }{\partial t}(x, \xi)  =  
  i (\alpha \circ \exp(i t H_{h})) (x, \xi) Y^{t}(x, \xi) \\[16pt]
Y^{t}(x, \xi)_{\big|_{t=0}}  =  X (x, \xi) \hfill 
\end{matrix}
\right . .
$$
From this relation we deduce that there is a $ N \times N $ matrix,
whose entries are real analytic symbols depending real analytically on
the real parameter $ t $, $ b_{t}(x, \xi)  $, such that
\begin{equation}
\label{eq:Yt}
Y^{t}(x, \xi) = b_{t}(x, \xi) X (x, \xi) ,
\end{equation}
and that $ b_{0} = \text{Id}_{N} $. Hence $ b_{t} $ is non singular if $ t $
is small enough.

Denote by $ X^{t} $ the holomorphic extension of $ \re Y^{t} $; since
$ X $ is real on $ \Lambda_{\Phi_{0}} $, using (\ref{eq:Yt}), we
have that
\begin{equation}
\label{eq:trealfields}
X^{t}(x, \xi) = \beta_{t}(x, \xi) X(x, \xi),
\end{equation}
where $ \beta_{t=0}(x, \xi) = \id_{N} $. In particular $
\beta_{t} $ is non singular, provided $ t $ is small.

Then we have
\begin{multline}
\label{eq:Pt}
P (x, \tilde{D}) = \sum_{i,j =1}^{N} X^{t}_{i}(x, \tilde{D})
a^{t}_{ij}(x, \tilde{D}; \lambda)  X^{t}_{j}(x, \tilde{D})  
\\
+ \lambda^{-1} \sum_{j=1}^{N} b^{t}_{j}(x, \tilde{D}; \lambda)
X^{t}_{j}(x, \tilde{D}) + 
\lambda^{-2} c^{t} (x, \tilde{D}; \lambda),
\end{multline}
for suitable analytic pseudodifferential operators $ a^{t}_{ij} $, $
b_{j}^{t} $, $ c^{t} $ of order zero.  

We can apply Theorem \ref{apriorifriendly} and deduce that
\begin{equation*}
\lambda^{\frac{2}{r}} \|u\|_{\Phi_{t}, \Omega_{1}} \leq C \left( \| Pu
  \|_{\Phi_{t}, \Omega} + \lambda^{\alpha} \| u \|_{\Phi_{t},
    \Omega\setminus\Omega_{1}} \right),
\end{equation*}
where $ \Omega_{1}\subset\!\subset \Omega $, $ \alpha $ is a fixed
positive integer and $ P $ denotes the realization on $ \Omega $ of
the given operator $ P $. Let $ Q $ the realization on $ \Omega $
of the real analytic pseudodifferential operator of order
$ \theta  < 2/r$ in the statement of Theorem \ref{th:2}. We have 
\begin{equation}
\label{eq:aprioritperturb}
\lambda^{\frac{2}{r}} \|u\|_{\Phi_{t}, \Omega_{1}} 
\leq C \left( \| \left( P+Q \right) u  \|_{\Phi_{t}, \Omega} +
\| Q u \|_{\Phi_{t}, \Omega}+ \lambda^{\alpha} \| u \|_{\Phi_{t},
    \Omega\setminus\Omega_{1}} \right).
\end{equation}
Let us consider the second term in the right hand side of
the above inequality. We have
\begin{equation*}
\| Q u \|_{\Phi_{t}, \Omega}
\leq C_{1} \lambda^{\theta} \| u \|_{\Phi_{t}, \Omega}
\leq C_{1} \lambda^{\theta}\left(  \| u \|_{\Phi_{t}, \Omega_{1}}
+\| u \|_{\Phi_{t}, \Omega\setminus \Omega_{1}}\right)
\end{equation*}
Since $ \theta < 2/r $ the first term of above inequality is absorbed
on the left hand side of (\ref{eq:aprioritperturb}) provided $ \lambda$
is large enough. Hence we have
\begin{equation}
\label{eq:aprioritperturb-f}
\lambda^{\frac{2}{r}} \|u\|_{\Phi_{t}, \Omega_{1}} 
\leq C \left( \| \left( P+Q \right) u  \|_{\Phi_{t}, \Omega} +
\lambda^{\alpha} \| u \|_{\Phi_{t}, \Omega\setminus\Omega_{1}} \right),
\end{equation}
for a suitable new positive constant $ C $.
 
Assume now that $ (x_{0}, \xi_{0}) \notin WF_{a}((P+Q)u) $. We may
choose $ \Omega $ in such a way that
\begin{equation}
\label{eq:wfpu}
\|  (P+Q) u \|_{\Phi_{0}, \Omega} \leq C e^{-\lambda/C},
\end{equation}
for a suitable positive constant $ C $. From
\begin{equation}
\label{eq:inteq} 
\Phi_{t}(x) = \Phi_{0}(x) + \frac{1}{2} \int_{0}^{t} h\Bigg(x,
\frac{2}{i}\partial_{x}\Phi_{s}(x)\Bigg) ds ,
\end{equation}
using the fact that $ h_{\big | \Lambda_{\Phi_{0}}} \geq 0 $, and
recalling that $ \Lambda_{\Phi_{t}} = \exp (it H_{h})
\Lambda_{\Phi_{0}} $, we deduce that $ h_{\big | \Lambda_{\Phi_{t}}}
\geq 0 $ so that
\begin{equation}
\label{eq:fit>fi0}
\Phi_{t}(x) \geq \Phi_{0}(x), \qquad x \in \Omega.
\end{equation}
Hence, by (\ref{eq:fit>fi0}) and (\ref{eq:wfpu}), 
\begin{equation}
\label{eq:pufit}
\| (P+Q) u \|_{\Phi_{t}, \Omega} \leq C e^{-\lambda/C},
\end{equation}
for a suitable positive constant $ C $.

Let us now estimate the second term in the right hand side of
(\ref{eq:aprioritperturb-f}). 
We point out that
$$ 
h_{\big|\Lambda_{\Phi_{0}}\cap\, \Omega \setminus \Omega_{1}} \geq a > 0. 
$$
It follows, because of (\ref{eq:inteq}), that
\begin{equation}
\label{eq:fit>}
\Phi_{t}(x) \geq \Phi_{0}(x) + c' t, \qquad x \in
\Omega\setminus\Omega_{1}. 
\end{equation}
Then
\begin{eqnarray*}
\| u \|^{2}_{\Phi_{t}, \Omega\setminus\Omega_{1}} & = & 
\int_{\Omega\setminus\Omega_{1}} e^{- 2 \lambda \Phi_{t}(x)} |
u(x)|^{2} L(dx) \\
& \leq  & \int_{\Omega\setminus\Omega_{1}} e^{- 2 \lambda \Phi_{0}(x)
  -2 \lambda c' t} | u(x)|^{2} L(dx) \\
& \leq & C e^{-2\lambda c' t} \lambda^{N}  \\
& \leq & C e^{- \lambda c'' t } , \qquad t > 0. 
\end{eqnarray*}
By (\ref{eq:aprioritperturb-f}) we deduce that $ \| u \|_{\Phi_{t}, \Omega_{1}}
\leq C \exp(- \lambda t /C) $, for a suitable positive constant $ C $.
Let now $ \Omega_{2} \subset\!\subset \Omega_{1} $ be a neighborhood
of $ x_{0} $ such that $ \Phi_{t} \leq \Phi_{0} + t/(2C) $ in $
\Omega_{2} $. We conclude that
$$ 
\| u \|^{2}_{\Phi_{0}, \Omega_{2}} \leq C e^{- \lambda t/C}, \qquad t
> 0.
$$
This proves the theorem.

\setcounter{section}{0}
\renewcommand\thesection{\Alph{section}}
\section{Appendix}
\setcounter{equation}{0}
\setcounter{theorem}{0}
\setcounter{proposition}{0}
\setcounter{lemma}{0}
\setcounter{corollary}{0}
\setcounter{definition}{0}
\setcounter{remark}{0}
\renewcommand{\thetheorem}{\thesection.\arabic{theorem}}
\renewcommand{\theproposition}{\thesection.\arabic{proposition}}
\renewcommand{\thelemma}{\thesection.\arabic{lemma}}
\renewcommand{\thedefinition}{\thesection.\arabic{definition}}
\renewcommand{\thecorollary}{\thesection.\arabic{corollary}}
\renewcommand{\theequation}{\thesection.\arabic{equation}}
\renewcommand{\theremark}{\thesection.\arabic{remark}}

We collect here a few facts concerning the hypoellipticity of
pseudodifferential perturbations of sums of squares. 

Let $ k $ be an integer, $ k \geq 2 $, and consider
$$ 
P (x, D) = D_{1}^{2} + x_{1}^{2(k-1)} D_{2}^{2} , \quad x \in \R^{2}.
$$
Let
$$ 
Q (x, D) = \lambda |D_{2}|^{2/k}.
$$
$ Q $ is microlocally elliptic near points in $ \Char(P) $. Here $
\lambda $ is a constant that we shall choose later. 

Performing a Fourier transform w.r.t. $ x_{2} $, and the dilation 
$$
x_{1} \rightarrow |\xi_{2}|^{- 1/k} x_{1} ,
$$ 
$ P + Q $ becomes, modulo a microlocally  elliptic factor which we can
disregard, 
$$ 
D_{1}^{2} + x_{1}^{2(k-1)} + \lambda.
$$
Let $ \phi_{\lambda}(x_{1}) $ be such that
$$ 
- \phi''_{\lambda} + x_{1}^{2(k-1)} \phi_{\lambda} + \lambda
\phi_{\lambda} = 0. 
$$
This is possible since the above operator, by \cite{berezin}, has a
discrete, positive, simple spectrum, so that, if $ \lambda $ is the
opposite of an eigenvalue, $ \phi_{\lambda} $, the associated
eigenfunction, satisfies the above equation. It is well known
that $ \phi_{\lambda} \in \mathscr{S}(\R) $, i.e. is rapidly
decreasing at infinity.

Consider
\begin{equation}
\label{eq:a1}
u(x) = \int_{0}^{+\infty} e^{i x_{2} \rho} \phi_{\lambda}(x_{1} \rho^{1/k}) (1 +
\rho^{4})^{-1} d\rho. 
\end{equation}
We see immediately that $ (P + Q) u = 0 $. Let us show that $ u \not
\in C^{\infty} $. 

Let us assume first that $ \phi_{\lambda}(0) \neq 0 $. Then
$$ 
u(0, x_{2}) = \phi_{\lambda}(0) \int_{0}^{+\infty} e^{i x_{2}\rho} (1 +
\rho^{4})^{-1} d\rho,
$$
and it is obvious that it cannot be smooth since we cannot take an
arbitrary derivative w.r.t. $ x_{2} $.

If $ \phi_{\lambda} (0) = 0 $ then necessarily $ \phi'_{\lambda}(0)
\neq 0 $. It suffices then to consider 
$$ 
(\partial_{x_{1}}u)(0, x_{2}) = \phi_{\lambda}'(0) \int_{0}^{+\infty} e^{i x_{2}\rho} (1 +
\rho^{4})^{-1} \rho^{1/k}  d\rho,
$$
and argue exactly as in the preceding case. 

This shows that a pseudodifferential perturbation of the same order as
the subellipticity index does not preserve the $ C^{\infty} $
hypoellipticity. Same argument for the analytic hypoellipticity.
 
\bigskip

We also point out that allowing a general pseudodifferential
perturbation of order equal to the subellipticity index may lead to
both a hypoelliptic and a non hypoelliptic operator. 

Consider for instance, microlocally near the
point $ (0, e_{2}) $, $ P $ as above and $ Q =
\lambda | D_{2}|^{2/k} + \mu(x_{2}) |D_{2}|^{\epsilon}$, with $
\epsilon < 2/k $. Then $ P+Q $ can be analytic hypoelliptic, $ G^{s} $
hypoelliptic for some $ s $, or not even $ C^{\infty} $ hypoelliptic,
depending on the analytic function $ \mu $. We do not wish to give any
detail about this since it goes far beyond the scope of the present
note.

\end{document}